\newcommand{\sO}{{\mathcal O}}
\newcommand{\scrB}{{\mathscr B}}
\newcommand{\scrL}{{\mathscr L}}
\newcommand{\C}{{\mathbb C}}
\newcommand{\N}{{\mathbb N}}
\renewcommand{\P}{{\mathbb P}}
\newcommand{\isom}{{\ \cong\ }}
\renewcommand{\to}[1][]{\xrightarrow{\ #1\ }}
\newtheoremstyle{citing}
  {}
  {}
  {\itshape}
  {}
  {\bfseries}
  {\textbf{.}}
  {.5em}
  {\thmnote{#3}}
\theoremstyle{plain}
\newtheorem{Thm}[subsection]{Theorem}
\newtheorem{Prop}[subsection]{Proposition}
\newtheorem{Lem}[subsection]{Lemma}
\theoremstyle{definition}
\newtheorem{definition}[subsection]{Definition}
\numberwithin{equation}{section}
\theoremstyle{remark}
\title[Base manifolds for Lagrangian fibrations]{Base manifolds for Lagrangian fibrations on hyperk\"ahler manifolds}
\author{Daniel Greb}
\address{Daniel Greb\\Arbeitsgruppe Algebra/Topologie\\
Fakult\"at f\"ur Ma\-the\-matik\\Ruhr-Universit\"at Bochum\\
44780 Bochum\\ Germany}
\email{daniel.greb@rub.de}
\urladdr{http://www.rub.de/ffm/Lehrstuehle/Greb/index.html}
\author{Christian Lehn}
\address{Christian Lehn\\Institut de Recherche Math\'ematique
Avanc\'ee\\ Universit\'e de Strasbourg\\
7 rue Ren\'e Descartes\\67084 Strasbourg Cedex\\France}
\email{lehn@math.unistra.fr}
\begin{document}
\thispagestyle{empty}

\begin{abstract}
Let $f\colon~X \to B$ be a fibration from a hyperk\"ahler manifold to a complex space $B$. Assuming that $B$ is smooth, we show that $B \cong \mathbb{P}^n$. This generalises a theorem of J.-M.~Hwang to the K\"ahler case.
\end{abstract}

\maketitle

\setlength{\parindent}{1em}
\setcounter{tocdepth}{1}

\section{Introduction}\label{sec intro}

One of the most important and startling conjectures in the study of hyperk\"ahler manifolds $X$  says that the base space of any non-trivial fibration $X\to B$ is the complex projective space $\P^n$, where $n = \dim X /2$. We refer the reader to~\cite[21.4]{GHJ} for a discussion of this conjecture. Any such fibration is automatically Lagrangian with respect to the holomorphic symplectic form by works of Matsushita; see \cite{Matsus03} or Section~\ref{sect:prelim} for a summary of his results.

With the additional hypothesis that $X$ be projective and that $B$ be smooth, the conjecture is known to hold by work of Jun-Muk Hwang~\cite{Hwang}. In this short note, we remove Hwang's projectivity assumption on $X$, and prove the following result:
\begin{Thm}\label{thm main}
Let $X$ be a hyperk\"ahler manifold, and let  $f\colon~X\to B$ be a fibration onto a complex space $B$. If $B$ is smooth, then $B\isom \P^n$.
\end{Thm}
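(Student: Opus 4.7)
The plan is to reduce to Hwang's theorem \cite{Hwang}. In Hwang's proof of the projective case, projectivity of $X$ enters only indirectly, through the fact that it forces projectivity of $B$; the core of the argument --- minimal rational curves on $B$, varieties of minimal rational tangents, and the Cartan--Fubini extension --- takes place on $B$ and uses $X$ only through the Lagrangian / cotangent identification along a general fibre, which is insensitive to projectivity of $X$. The task therefore reduces to showing that $B$ is projective.

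First I recall Matsushita's theorems from Section~\ref{sect:prelim}: $f$ is Lagrangian with general fibre an abelian variety of dimension $n=\dim B$; and when $B$ is smooth, $B$ has the same Hodge numbers as $\P^n$. In particular, $h^{2,0}(B)=0$ and $h^{1,1}(B)=1$. Once I additionally know that $B$ is K\"ahler, projectivity follows at once: $H^2(B,\R) = H^{1,1}(B,\R)$ is one-dimensional, rational classes are dense in the K\"ahler cone, and Kodaira's embedding theorem supplies an ample line bundle.

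The crux is therefore to establish that $B$ is a K\"ahler manifold. Since all fibres of $f$ are $n$-dimensional (by Matsushita) and both $X$ and $B$ are smooth, $f$ is flat. In this setting a descent result of Varouchas for the K\"ahler property of the base of a proper flat holomorphic map applies and produces a K\"ahler metric on $B$. A more hands-on alternative is to construct a candidate K\"ahler class on $B$ by integrating $\omega_X^{n+1}$ along the fibres of $f$ (for $\omega_X$ a K\"ahler form on $X$), verify its strict positivity on the locus of regular values using the Lagrangian nature of $f$, and then extend across the discriminant.

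With $B$ now known to be smooth projective, I would carry out the reduction to Hwang's theorem: a careful re-reading of \cite{Hwang} confirms that projectivity of $X$ is never used beyond implying projectivity of $B$, so the proof concludes $B\isom \P^n$. I expect the main obstacle to be the step producing the K\"ahler form on $B$: the singular fibres of $f$ make the direct fibre-integral construction delicate, and invoking a Varouchas-type descent requires verifying the precise hypotheses in our setting, whereas the passage from K\"ahler to projective and the reduction to \cite{Hwang} are, by comparison, relatively formal.
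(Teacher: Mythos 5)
Your first step---establishing that $B$ is projective---is sound and essentially reproduces Proposition~\ref{prop:generalisedMatsushita}: K\"ahlerness of $B$ follows from the descent results of Varouchas \cite{Va86, Va89} as organised in \cite{AmerikCampanaTori}, and Matsushita's theorems \cite{Matsus03} then give projectivity directly, so you do not need to argue via $h^{1,1}(B)=1$ (be careful there anyway: the computation of the Hodge numbers of $B$ and the statement that the general fibre is an \emph{abelian variety} rather than merely a complex torus are results proved under a projectivity hypothesis on $X$). The genuine gap is in your second step. You assert that ``a careful re-reading of \cite{Hwang} confirms that projectivity of $X$ is never used beyond implying projectivity of $B$,'' but you do not carry out that re-reading, and the assertion is doubtful. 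Hwang's argument does not take place purely on $B$: it uses, among other things, the polarised family of abelian varieties over the regular locus of $f$ (the polarisation coming from an ample line bundle on $X$), Matsushita's identification of the higher direct images $R^if_*\sO_X$ with $\Omega^i_B$ (available at the time only for projective $X$), and further input from the geometry of $X$ in the analysis of the varieties of minimal rational tangents and of the discriminant. None of this is automatic for a merely K\"ahler $X$, even once $B$ is known to be projective. In effect you have reduced the theorem to an unproved strengthening of \cite{Hwang}---which is exactly the difficulty the theorem is meant to overcome, and the reason this paper exists.

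The paper avoids this entirely by never applying Hwang's theorem to $X$ itself. It pulls back a very ample line bundle from $B$ to obtain $L$ on $X$, uses Matsushita's deformation results \cite{Matsus09} to deform $(X,L)$ together with the fibration over a positive-dimensional smooth hypersurface $S_L$ of the Kuranishi space (positive-dimensionality requires the small lemma $b_2(X)\geq 4$), invokes Fujiki \cite{Fuj83} to find a dense set of parameters $t$ with $X_t$ projective, applies Hwang's theorem to those projective deformations to get $B_t\iso\P^n$, and finally concludes for the central fibre by Siu's global non-deformability of $\P^n$ \cite{SiuDeformationRigidityPn}, applied to the family of bases $\scrB_\Delta\to\Delta$ after checking that its central fibre is reduced, hence that the family is smooth. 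To salvage your approach you would have to supply actual proofs of the K\"ahler analogues of each ingredient of \cite{Hwang} listed above; as written, the proposal does not prove the theorem.
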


Our proof is a simple combination of fundamental results due to A.~Fujiki~\cite{Fuj83}, J.-M.~Hwang~\cite{Hwang}, D.~Matsushita~\cite{Matsus03, Matsus09}, and Y.-T.~Siu~\cite{SiuDeformationRigidityPn}: after noticing that the base manifold has to be projective, we pull back a very ample line bundle from $B$ to $X$ and use this line bundle to deform the given fibration to a sequence of projective ones. Then, we apply Hwang's theorem to these projective deformations and use global deformation rigidity of $\P^n$ to conclude the desired result for the central fibre.

\section{Preliminaries}\label{sect:prelim}
We start by fixing our notation and by recalling definitions of the basic objects investigated in this note.
\begin{definition}
A compact K\"ahler manifold is called \emph{hyperk\"ahler} or \emph{irreducible holomorphic symplectic} if it is simply-connected, and if $H^0\bigl(X, \Omega_X^2\bigr) = \C\sigma$, where $\sigma$ is everywhere non-degenerate. A \emph{fibration} on $X$ is a (proper) surjective holomorphic map $f\colon~X\to B$ with $f_*\sO_X=\sO_B$ from $X$ to a complex space $B$ with $0<\dim B < \dim X$. In particular, the base $B$ of a fibration is normal, and $f$ has connected fibres. A \emph{Lagrangian fibration} on $X$ is a fibration $f: X \to B$ such that every irreducible component of every fibre of $f$ is a Lagrangian subvariety with respect to the holomorphic symplectic form $\sigma$.
\end{definition}
To make this note more self-contained, we collect some known results concerning fibrations on hyperk\"ahler manifolds, which we will use in the subsequent proof, in the following proposition.
\begin{Prop}\label{prop:generalisedMatsushita}
 Let $X$ be a hyperk\"ahler manifold of dimension $2n$, and let $f\colon~X \to B$ be a fibration onto a normal complex space $B$. Then, $f$ is a Lagrangian fibration onto a normal projective variety. In particular, $B$ has dimension $n$.
\end{Prop}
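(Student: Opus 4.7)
My plan is to assemble the proposition from three results due to Matsushita: the equality $\dim B = n$, the Lagrangian property of $f$, and projectivity of $B$. Normality of $B$ is built into our definition of a fibration and requires no further argument. I expect projectivity of $B$ to be the main obstacle.

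For the dimension count and the Lagrangian property, I would follow Matsushita's argument in \cite{Matsus03}. Restricting to the open dense subset $U \subseteq B$ over which $B$ is smooth and $f$ is submersive, and working at a point $x$ of a smooth fibre $F$, I would compose the natural inclusion $f^*\Omega_B^1 \into \Omega_X^1$ with the isomorphism $\Omega_X^1 \cong T_X$ induced by the holomorphic symplectic form $\sigma$; at $x$, the resulting image is precisely the symplectic orthogonal $(T_xF)^{\perp_\sigma}$. A straightforward dimension count then forces $\dim B \leq n$, with equality exactly when $T_xF \subseteq (T_xF)^{\perp_\sigma}$, i.e.\ when the general fibre is isotropic and hence, by dimension reasons, Lagrangian. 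For the reverse inequality $\dim B \geq n$ I would invoke the Beauville--Bogomolov quadratic form $q$ on $H^2(X,\R)$ together with Fujiki's identity $\int_X \beta^{2n} = c_X \cdot q(\beta)^n$: since $B$ is Kähler, as the proper image of the Kähler space $X$ by a result of Varouchas, picking a Kähler class $\alpha$ on $B$ produces a non-zero class $f^*\alpha \in H^{1,1}(X,\R)$ with $(f^*\alpha)^{\dim B + 1} = 0$, and comparing with the signature of $q$ forces $\dim B \geq n$.

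The projectivity of $B$ is the heart of the matter, and I would invoke Matsushita's result from \cite{Matsus09}. Its key input is the identification of the higher direct images $R^if_*\sO_X$ with (the reflexive hulls of) the sheaves of holomorphic $i$-forms on $B$; applied with $i = 2$, this gives $H^2(\widetilde B, \sO_{\widetilde B}) = 0$ on any resolution of singularities $\widetilde B \to B$. Consequently the Kähler cone of $\widetilde B$ consists entirely of $(1,1)$-classes of Hodge type and any Kähler class can be perturbed to a rational, and hence after scaling integral, $(1,1)$-class. Kodaira's embedding theorem then yields projectivity of $\widetilde B$, from which projectivity of the normal base $B$ follows by descent along the birational proper morphism $\widetilde B \to B$.
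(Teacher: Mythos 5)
Your overall strategy is the one underlying the literature that the paper relies on; note, though, that the paper's own proof is essentially a two-line citation ($B$ is a normal K\"ahler space by Varouchas and Matsushita's structure results, as explained in \cite{AmerikCampanaTori}, and then \cite[Thm.~2.1 and Thm.~3.1]{Matsus03} give the Lagrangian property, the dimension count and projectivity all at once), whereas you reconstruct the arguments behind those citations. Your treatment of $\dim B = n$ and of the Lagrangian property is the standard Matsushita argument and is fine; just be aware that ``$B$ is K\"ahler because it is the proper image of a K\"ahler space'' is not a formal consequence of Varouchas's theorems alone --- one needs equidimensionality of $f$ as an input, which is precisely the point of the footnote in \cite{AmerikCampanaTori} that the paper cites.

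The genuine gap is the final step of your projectivity argument. Projectivity does \emph{not} descend along a proper birational morphism $\widetilde B \to B$: a normal compact complex space dominated bimeromorphically by a projective manifold is merely Moishezon, and a normal Moishezon K\"ahler space need not be projective --- the standard way to close this (Namikawa's criterion) requires in addition that $B$ have rational singularities, which you neither assume nor prove. So you must either supply that input, or run a projectivity criterion directly on the singular space $B$ (perturbing a K\"ahler class to a rational one inside $H^2(B,\R)$ and applying Grauert's ampleness criterion, which again needs a vanishing statement on $B$ itself rather than on $\widetilde B$). A second, smaller objection: deriving $H^2\bigl(\widetilde B, \sO_{\widetilde B}\bigr)=0$ from the identification $R^if_*\sO_X \cong \Omega_B^{[i]}$ is both heavier than necessary and potentially circular, since Matsushita establishes that identification under projectivity hypotheses. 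The elementary route is that a nonzero holomorphic $2$-form on $\widetilde B$ would pull back to a nonzero multiple of $\sigma$ on a resolution of $X\times_B\widetilde B$, which is impossible because $\sigma^n \neq 0$ while the $n$-th power of a $2$-form pulled back from an $n$-dimensional base vanishes; Hodge symmetry on the compact K\"ahler manifold $\widetilde B$ then gives the vanishing of $H^2\bigl(\widetilde B, \sO_{\widetilde B}\bigr)$.
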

\begin{proof}
As explained in \cite[Thm.~1 and footnote]{AmerikCampanaTori}, using results of Var\-ouchas \cite{Va86, Va89} and the fundamental results of Matsushita \cite{Matsus99a, Matsus01a, Matsus00}, one shows without any a priori assumption on the base of the fibration that $B$ is a normal K\"ahler space. Then, \cite[Thm.~2.1 and Thm.~3.1]{Matsus03} imply the claim. 
\end{proof}

\section{Proof of Theorem \ref{thm main}}\label{sec proof}
Let $X$ be a hyperk\"ahler manifold of dimension $2n$, and let $f\colon X \to B$ be a fibration onto a smooth complex space $B$.

Since $B$ is projective by Proposition~\ref{prop:generalisedMatsushita}, there exists a very ample line bundle on $B$. Let $L$ denote its pullback under $f$. Furthermore, let $\mathscr{X} \to (S, 0)$ be the (smooth) Kuranishi space of $X$; in particular, $\mathscr{X} \to (S, 0)$ is a smooth family of hyperk\"ahler manifolds.  By \cite[Thm.~1.1(1) and 1.1(2)]{Matsus09} there exists a smooth hypersurface $(S_L,0)$ in $(S,0)$, and a line bundle $\mathscr{L}$ on the pullback $\mathscr{X}_L = S_L \times_S \mathscr{X}$ of $\mathscr{X}$ to $S_L$ such that the restriction of $\mathscr{L}$ to the fibre over the reference point $0$ is isomorphic to $L$. We denote the natural projection $\mathscr{X}_L \to S_L$ by $p$, and we note that both $\mathscr{X}_L$ as well as $p$ are smooth. 
As usual we will take a representative of the germ $(S_L, 0)$ and shrink it if necessary (keeping the base point), usually without mentioning this explicitly.

By \cite[Thm.~1.1(3) and Cor.~1.2]{Matsus09} the pushforward $p_*\scrL$ is a vector bundle, the canonical map $p^*p_*\scrL \to \scrL$ is surjective, and thus gives rise to a morphism  $F\colon~\mathscr{X}_L \to \P_{S_L}(p_*\scrL^\vee)$  over $S_L$ that extends $f\colon~X\to B$ to the whole family $\mathscr{X}_L$. 

\begin{Lem}
 We have $b_2(X) \geq 4$. In particular,  $S_L$ has positive dimension.
\end{Lem}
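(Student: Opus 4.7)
The plan is to reduce the positivity of $\dim S_L$ to the inequality $b_2(X)\geq 4$ by dimension count, and then to establish this inequality via Hodge theory. Since the second statement formally follows from the first, I would begin by recalling that by Bogomolov--Tian--Todorov, the Kuranishi space $S$ of the hyperk\"ahler manifold $X$ is smooth of dimension $h^1(X,T_X)$. Contracting with the holomorphic symplectic form gives an isomorphism $T_X \isom \Omega_X^1$, so $h^1(X, T_X) = h^{1,1}(X)$. Combining this with the hyperk\"ahler Hodge identities $h^{2,0}(X)=h^{0,2}(X)=1$, we get $\dim S = h^{1,1}(X) = b_2(X)-2$. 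Since $S_L$ was described as a smooth hypersurface in $S$, this yields $\dim S_L = b_2(X)-3$, and the ``in particular'' statement reduces to showing $b_2(X)\geq 4$.

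To prove $b_2(X)\geq 4$, equivalently $h^{1,1}(X)\geq 2$, I would exhibit two linearly independent classes in $H^{1,1}(X,\R)$. The first is any K\"ahler class $\omega$, which satisfies $q(\omega)>0$ for the Beauville--Bogomolov form $q$. The second is $\alpha := c_1(L) = f^*c_1(A)$, where $A$ is the very ample line bundle on $B$ whose pullback defines $L$. Since $\dim B = n$ by Proposition~\ref{prop:generalisedMatsushita}, one has $\alpha^{n+1}=0$ in $H^*(X,\R)$, and in particular $\int_X \alpha^{2n}=0$. The Fujiki relation $\int_X \beta^{2n} = c_X\cdot q(\beta)^n$, valid with a positive constant $c_X>0$ for every $\beta\in H^2(X,\R)$, then forces $q(\alpha)=0$. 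Since $q(\omega)>0$ while $q(\alpha)=0$, the classes $\omega$ and $\alpha$ are linearly independent, giving the desired bound.

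I do not anticipate a serious obstacle: the only subtlety is the vanishing $q(\alpha)=0$, but this is essentially the observation underlying Matsushita's theorem that any fibration of a hyperk\"ahler manifold is Lagrangian, and it follows immediately from the Fujiki relation applied to a class pulled back from a variety of dimension strictly smaller than $2n$. Everything else is bookkeeping of Hodge numbers.
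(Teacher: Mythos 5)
Your argument is correct and follows the same overall strategy as the paper: the ``in particular'' part is the dimension count $\dim S = h^1(X,T_X) = h^{1,1}(X) = b_2(X)-2$ together with $S_L$ being a hypersurface in $S$, and the heart of the matter is exhibiting two linearly independent classes in $H^{1,1}(X)$, namely $c_1(L)$ and a K\"ahler class. The only real difference is how linear independence is justified: the paper simply observes that $L$ is not ample (it is trivial on the positive-dimensional fibres of $f$), so $c_1(L)$ cannot be a multiple of a K\"ahler class; you instead invoke the Fujiki relation to get $q\bigl(c_1(L)\bigr)=0$ while $q(\omega)>0$ for the Beauville--Bogomolov form. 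Both work, and your route is the one that generalises to Matsushita's results, but the paper's is more elementary in that it never mentions $q$. One small point you should add: from $q(\omega)>0$ and $q(\alpha)=0$ one only concludes that $\omega$ and $\alpha$ are linearly independent after checking $\alpha\neq 0$ (otherwise $\alpha=0\cdot\omega$ is consistent with both conditions). This is easy --- for instance $\alpha$ pairs positively with any curve not contracted by $f$, or $\alpha^n=f^*\bigl(c_1(A)^n\bigr)\neq 0$ since $f^*$ is injective on cohomology for the surjective map $f$ --- but it is needed to close the argument.
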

\begin{proof}
 Let $\alpha$ be a K\"ahler class on $X$. Since $L$ is not ample, its Chern class is not a multiple of $\alpha$, hence $h^{1,1}(X) \geq 2$, implying the first claim. For the second claim, note that the dimension of the Kuranishi space $S$ of $X$ is $\dim H^1(X, T_X) = h^{1,1}(X) \geq 2$, and that $S_L$ is a hyperplane in $S$.
\end{proof}

So $\mathscr{X}_L\to S_L$ is a positive-dimensional smooth family of hyperk\"ahler manifolds. Restricting the family $\mathscr{X}$ to a general smooth embedded disk $\Delta \subset S_L$ through the origin, we obtain a commutative diagram
 \begin{equation}\label{restr_fibration}
\begin{gathered}
\xymatrix{
\mathscr{X}_\Delta \ar[r]^F\ar[rd]_p& \scrB_\Delta \ar[d]^\pi\\
& \Delta,\\
}
\end{gathered}
\end{equation}
where $p\colon~\mathscr{X}_\Delta \to \Delta$ is a smooth family of hyperk\"ahler manifolds with smooth total space, and $\mathscr{B}_\Delta$ is the scheme-theoretic image of the fibration induced by a sufficiently high tensor power of $\mathscr{L}|_{\mathscr{X}_\Delta}$. Note that $\pi\colon~\scrB_\Delta \to \Delta$ is a flat family with normal total space.
\begin{Lem}\label{lem:reduced}
 The scheme-theoretic fibre $(\scrB_\Delta)_0 = \pi^{-1}(0)$ is reduced, hence smooth.
\end{Lem}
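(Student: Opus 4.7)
The plan is to compare $(\mathscr{B}_\Delta)_0$ scheme-theoretically with its pullback under $F$, exploiting that $p = \pi \circ F$ is smooth while $\mathscr{B}_\Delta$ is normal. Let $t$ be a local parameter at $0 \in \Delta$. Because $\pi$ is flat, $\pi^*t$ is a non-zero-divisor on $\mathscr{B}_\Delta$, and $(\mathscr{B}_\Delta)_0 = V(\pi^*t)$ as closed subschemes. Pulling the defining ideal back along $F$ yields $F^{-1}((\mathscr{B}_\Delta)_0) = V(F^*\pi^*t) = V(p^*t) = X_0$, the last equality since $p$ is smooth with central fibre $X_0$. In particular, $F^*((\mathscr{B}_\Delta)_0) = X_0$ as effective Cartier divisors on $\mathscr{X}_\Delta$.

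Next I would pass to the Weil divisor group. Normality of $\mathscr{B}_\Delta$ lets us write $(\mathscr{B}_\Delta)_0 = \sum_i m_i D_i$ with prime Weil divisors $D_i$ and multiplicities $m_i \geq 1$; on the smooth variety $\mathscr{X}_\Delta$, $X_0$ is a single reduced prime divisor, because $p$ is smooth and $X = X_0$ is connected. First I would show there is only one component $D$: for each $i$, surjectivity of $F$ gives $F(F^{-1}(D_i)) = D_i$, while $F^{-1}(D_i) \subseteq F^{-1}((\mathscr{B}_\Delta)_0) = X_0$, so $D_i \subseteq F(X_0)$; the latter is irreducible and hence contained in a single $D_{i_0}$, which forces $D_i = D_{i_0}$ for every $i$. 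Next, a prime divisor $E \subset \mathscr{X}_\Delta$ with $\operatorname{ord}_E(F^*\pi^*t) > 0$ satisfies $F(E) \subseteq (\mathscr{B}_\Delta)_0$, so $p(E) = \{0\}$, which forces $E \subseteq X_0$ and then $E = X_0$ by irreducibility. Working near the generic point of $D$, where $\pi^*t = u \cdot s^m$ for a local uniformizer $s$ of $D$ in the DVR $\mathscr{O}_{\mathscr{B}_\Delta,\eta_D}$ and a unit $u$, one obtains $\operatorname{ord}_{X_0}(F^*\pi^*t) = m \cdot \operatorname{ord}_{X_0}(F^*s)$; equating this to $\operatorname{ord}_{X_0}(p^*t) = 1$ forces $m = 1$. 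Thus $(\mathscr{B}_\Delta)_0 = D$ is reduced as a Weil divisor.

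To convert this into a scheme-theoretic statement, I would invoke Serre's condition $S_2$ for the normal variety $\mathscr{B}_\Delta$: dividing out by the non-zero-divisor $\pi^*t$ yields an $S_1$-quotient, so $(\mathscr{B}_\Delta)_0$ has no embedded components and its scheme structure is recovered from its Weil divisor class. Hence $(\mathscr{B}_\Delta)_0$ is reduced. Its underlying reduced scheme coincides with the scheme-theoretic image of $F|_{X_0} \colon X \to \mathscr{B}_\Delta$, which is $B$ because $L = f^*A$ for some very ample $A$ on $B$, so $F|_{X_0}$ factors as $f \colon X \to B$ followed by the projective embedding of $B$ defined by a sufficiently high power of $A$. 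Therefore $(\mathscr{B}_\Delta)_0 \cong B$ is smooth. The principal bookkeeping obstacle is the passage from Weil-divisor equalities back to scheme-theoretic ones, which rests precisely on the $S_2$-property of $\mathscr{B}_\Delta$ ruling out embedded components in $(\mathscr{B}_\Delta)_0$.
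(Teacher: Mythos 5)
Your proof is correct and follows the same strategy as the paper's: establish that $(\scrB_\Delta)_0$ is generically reduced using smoothness of $p = \pi \circ F$, then use normality (Serre's condition $S_2$) of $\scrB_\Delta$ together with the fact that $\pi^*t$ is a non-zerodivisor to rule out embedded components, and finally identify the reduced fibre with $B$. The only real difference is cosmetic: where the paper deduces generic reducedness from the observation that the normal space $\scrB_\Delta$ is smooth at general points of the central fibre and $\pi$ is submersive there, you carry out an equivalent order-of-vanishing computation at the generic point of the unique component $D$.
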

\begin{proof}Since $\scrB_\Delta$ is normal, it is non-singular at general points of the central fibre. Since $p$ is a smooth morphism, it follows from diagram \eqref{restr_fibration} that $(\scrB_\Delta)_0$ is generically reduced. Let $t$ be a coordinate on $\Delta$. We note that $\pi^*t$ is not a zerodivisor in any of the local rings $\mathscr{O}_{\scrB_\Delta, b}$ of points $b \in \pi^{-1}(0)$. Consequently, as $\mathscr{B}_\Delta$ satisfies Serre's condition $S_2$, the scheme $(\scrB_\Delta)_0 = (\pi^{-1}(0), \mathscr{O}_{\scrB_\Delta}/\pi^*t \cdot\mathscr{O}_{\scrB_\Delta})$ does not have any embedded components, and is therefore reduced, cf.~\cite[p.~125]{Matsumura}. Hence, $(\scrB_\Delta)_0 = \bigl((\scrB_\Delta)_0)\bigr)_{\mathrm{red}} = B$, which is smooth by assumption.
\end{proof}
Lemma~\ref{lem:reduced} implies that $\pi\colon~\scrB_\Delta \to \Delta$ is flat with smooth central fibre, hence a smooth morphism. Moreover,  \cite[Thm.~4.8(2)]{Fuj83} implies that there exists a dense subset $T\subset \Delta$ such that $X_t:=p^{-1}(t)$ is projective for all $t\in T$, see also \cite[Prop.~26.6]{GHJ}. Therefore, by Hwang's theorem \cite{Hwang} the fibre $B_t:=\pi^{-1}(t)$ is isomorphic to $\P^n$ for all $t\in T$. Hence, we find a sequence of points $\{p_\nu\}_{\nu \in \N}$ in $T \subset \Delta$ such that $\lim_{\nu \to \infty}(p_\nu) = 0$, and such that $B_t \cong \mathbb{P}^n$. Hence, global deformation rigidity of $\P^n$, see~\cite[paragraph following the Main Theorem]{SiuDeformationRigidityPn} implies that the central fibre is likewise isomorphic to $\P^n$.  This concludes the proof of Theorem~\ref{thm main}.
\\

{\it{Acknowledgements.}} The authors are grateful to S\"onke Rollenske for helpful discussions, and to Tim Kirschner, whose questions led to significant improvements in the exposition of our arguments. Moreover, the authors would like to thank the anonymous referees for helpful comments and remarks. The first named author gratefully acknowledges support by the Baden-W\"urttemberg Stiftung through the ``Eliteprogramm f\"ur Postdoktorandinnen und Postdoktoranden'', as well as by the DFG-Research Training Group GK 1821 ``Cohomological methods in geometry''.  The second named author was supported by the LABEX IRMIA, Strasbourg.

\enlargethispage*{\baselineskip}

%
\end{document}